\newtheorem{thm}{Theorem}[section]
\newtheorem{lem}[thm]{Lemma}
\theoremstyle{definition}
\newtheorem{defn}{Definition}[section]
\theoremstyle{Conjecture}
\theoremstyle{remark}
\newtheorem{rem}{Remark}[section]
\theoremstyle{Example}
\newcommand{\be}{\begin{equation}}
\newcommand{\ee}{\end{equation}}
\newcommand{\bea}{\begin{eqnarray}}
\newcommand{\eea}{\end{eqnarray}}
\newcommand{\ben}{\begin{eqnarray*}}
\newcommand{\een}{\end{eqnarray*}}
\newcommand{\bet}{\begin{equation}
\begin{split}}
\newcommand{\eet}{\end{split}
\end{equation}}
\begin{document}
\title[Cluster points of jumping coefficients of plurisubharmonic functions]
{Cluster points of jumping coefficients and equisingularties of plurisubharmonic functions}

\author{Qi'an Guan}
\address{Qi'an Guan: School of Mathematical Sciences, and Beijing International Center for Mathematical Research,
Peking University, Beijing, 100871, China.}
\email{guanqian@amss.ac.cn}
\author{Zhenqian Li}
\address{Zhenqian Li: School of Mathematical Sciences, Peking University, Beijing, 100871, China.}
\email{lizhenqian@amss.ac.cn}

\thanks{The first author was partially supported by NSFC-11522101 and NSFC-11431013.}

\date{\today}
\subjclass[2010]{32C35, 32U05, 32U25}
\thanks{\emph{Key words}. Plurisubharmonic function, Multiplier ideal sheaf, Lelong number, Equisingularity}

\begin{abstract}
In this article, we will construct a plurisubharmonic function whose jumping coefficients have a cluster point. We also give a class of plurisubharmonic functions which cannot be equisingular to any plurisubharmonic function with generalized analytic singularities.
\end{abstract}
\maketitle

\section{Introduction}\label{sec:introduction}

\subsection{Cluster points of jumping coefficients}
Let $X$ be a complex manifold of dimension $n$ and $\varphi$ a quasi-plurisubharmonic (abbr. quasi-psh) function on $X$. The \emph{multiplier ideal sheaf} $\mathscr{I}(\varphi)$ is defined to be the sheaf of germs of holomorphic functions $f$ such that $|f|^2e^{-2\varphi}$ is locally integrable, which is a coherent sheaf of ideals (see \cite{De10}).

Let $x\in X$ be a point. By the solution to Demailly's strong openness conjecture \cite{G-Z_open}, i.e., $$\mathscr{I}(\varphi)=\mathscr{I}_+(\varphi):=\bigcup_{\varepsilon>0}\mathscr{I}((1+\varepsilon)\varphi),$$
which was posed by Demailly in \cite{De01}, there is an increasing sequence $$0=\xi_0(\varphi;x)<\xi_1(\varphi;x)<\xi_2(\varphi;x)<\cdots$$ of real numbers $\xi_k=\xi_k(\varphi;x)$ such that $\mathscr{I}(c{\cdot}\varphi)_x=\mathscr{I}(\xi_k{\cdot}\varphi)_x$ for $c\in[\xi_k,\xi_{k+1})$ and $$\mathscr{I}(\xi_{k+1}{\cdot}\varphi)_x\subsetneqq\mathscr{I}(\xi_k{\cdot}\varphi)_x\quad \mbox{for\ every\ } k.$$
If the Lelong number $\nu(\varphi,x)=0$, we put $\xi_1=+\infty$.

\begin{defn} (\cite{ELSV}, \cite{De15}).
The real numbers $\xi_k(\varphi;x)$ is called the \emph{jumping coefficients} of \emph{jumping numbers} of $\varphi$ at $x$. We say that $\xi$ is a jumping coefficient of $\varphi$ on an analytic set $A\subset X$ if it is a jumping coefficient of $\varphi$ at some point $x\in A$. The collection of all jumping coefficients of $\varphi$ at $x$ is denoted by Jump($\varphi;x$).
\end{defn}

By the strong openness property of multiplier ideal sheaves, we know that Jump($\varphi;x$) satisfies the descending chain condition (DCC): any decreasing sequence of jumping coefficients stabilizes. Moreover, it is easy to see that Jump($\varphi;x$) does not satisfy ACC. By a discussion on Hironaka's log resolution \cite{De15}, the sequence $(\xi_k)$ will go to infinity when $\varphi$ has analytic singularities, i.e., $\varphi$ can be written locally as
$$\varphi=\frac{c}{2}\log\big(\sum\limits_{k=1}^{m}|f_{k}|^2\big)+O(1),$$
where $c\in\mathbb{R}^+$ and $f_{k}$ are holomorphic functions. However, the sequence $(\xi_k)$ will not go to infinity for general plurisubharmonic functions as we will present, which is very different from the algebraic case. One can refer to \cite{ELSV} and \cite{De15} for more properties of jumping coefficients.

In the present subsection, we will construct a plurisubharmonic function $\varphi$ such that Jump($\varphi;x$) has a cluster point, i.e., there exists a jumping coefficient $\xi_{k_0}$ such that for every small enough $\varepsilon>0$ we have a jumping coefficient $\xi_{k}\in(\xi_{k_0}-\varepsilon,\xi_{k_0})$. Indeed, we will prove the following:

\begin{thm} \label{cluster}
There exists a plurisubharmonic function $\varphi$ on $\mathbb{C}^n\ (n\geq2)$ such that $1$ is a cluster point of  \emph{Jump($\varphi;o$)}.
\end{thm}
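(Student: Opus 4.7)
The plan is to construct $\varphi$ explicitly as a convergent infinite sum $\varphi=\sum_k\lambda_k\psi_k$ of plurisubharmonic functions with analytic singularities, choosing the building blocks $\psi_k$ and the positive weights $\lambda_k$ so that the jumping coefficients of $\varphi$ at $o$ contain a sequence $\alpha_k\nearrow 1$. Since a plurisubharmonic function with analytic singularities has jumping coefficients tending to infinity (as recalled in the introduction), $\varphi$ must necessarily fail to have analytic singularities; a genuinely infinite sum is therefore required. It suffices to treat $n=2$: the general case $n\ge 2$ follows by trivial extension in the remaining variables, which only contributes finite Lebesgue factors and leaves the structure of the multiplier ideals at $o$ unchanged.

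For each $k$, I would select a holomorphic germ $f_k$ at $o\in\mathbb{C}^2$ and a \emph{witness monomial} $g_k$ so that $\psi_k:=\log|f_k|$ has a prescribed jumping coefficient $s_k$ at $o$ detected by $g_k$, in the sense that $\int|g_k|^2 e^{-2t\psi_k}\,dV$ near $o$ converges for $t<s_k$ and diverges for $t\ge s_k$. Natural candidates are Brieskorn-type polynomials of the form $f_k=z_1^{a_k}+z_2^{b_k}$, or more flexibly tailored algebraic singularities, adjusted so that the thresholds $s_k$, once rescaled by $\lambda_k$ in the sum, yield critical values $\alpha_k\to 1$. One then sets $\varphi:=\sum_k\lambda_k\psi_k$ with the $\lambda_k$ small enough that the series converges in $L^1_{\mathrm{loc}}$, making $\varphi$ a genuine plurisubharmonic function on $\mathbb{C}^2$.

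The heart of the proof is to verify that each $\alpha_k$ really is a jumping coefficient of the sum $\varphi$ at $o$:
\be
\int_{U_k}|g_k|^2 e^{-2\alpha\varphi}\,dV<+\infty\ \text{for}\ \alpha<\alpha_k,\qquad =+\infty\ \text{for}\ \alpha\ge\alpha_k,
\ee
on a carefully chosen neighbourhood $U_k$ of $o$. In the product $e^{-2\alpha\varphi}=\prod_j e^{-2\alpha\lambda_j\psi_j}$ the $k$-th factor reproduces the threshold $\alpha_k$ for $g_k$ inherited from the building-block step, while the remaining factors $(j\neq k)$ must contribute only a bounded multiplicative constant on $U_k$; this is arranged by placing the zero loci of the $f_j$ in well-separated positions in the polydisc and by keeping $\lambda_j\psi_j$ for $j\neq k$ quantitatively small on $U_k$. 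Once each $\alpha_k$ is certified as a jumping coefficient and $\alpha_k\to 1$, the theorem follows.

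The main obstacle is precisely this decoupling. The multiplier ideal sheaf $\mathscr I(\alpha\varphi)_o$ depends on \emph{all} summands simultaneously, and one must show that cross-terms neither smooth out the prescribed jumps nor create spurious jumps that shift the cluster point away from $1$. Achieving this requires a delicate parameter balance: the weights $\lambda_k$ must decay fast enough for the series to define a plurisubharmonic function, yet be in the right ratio to $s_k$ so that the rescaled thresholds $\alpha_k$ land near $1$; and the geometric positioning of the $f_k$ must guarantee that, on the test region for $g_k$, the other $\psi_j$ are effectively inert. The strong openness property recalled in the introduction then guarantees that the discretely produced sequence $\alpha_k$ is genuinely a subset of $\mathrm{Jump}(\varphi;o)$, and its limit $1$ is the claimed cluster point.
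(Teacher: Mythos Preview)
Your overall architecture---an infinite sum $\varphi=\sum_k\lambda_k\psi_k$ of plurisubharmonic functions with analytic singularities and weights $\lambda_k\to 0$---is exactly what the paper does (with $\psi_k=\log(|z_1|+|z_2/k|^{M^{2k}})$, $\lambda_k=M^{-k}$, and an extra leading $\log|z_1|$). But the ``decoupling'' you describe cannot work as stated. If the zero loci of the $f_j$ are placed in ``well-separated positions in the polydisc'', then all but finitely many of them miss $o$, the corresponding $\psi_j$ are bounded near $o$, and locally $\varphi$ has analytic singularities---which, as you note yourself, cannot produce a cluster point. If instead every zero locus passes through $o$ (as with Brieskorn germs $z_1^{a_k}+z_2^{b_k}$), there is no separation to exploit and you have not supplied any mechanism to make the factors $e^{-2\alpha\lambda_j\psi_j}$, $j\neq k$, harmless on a neighbourhood of $o$. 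Bounding the full product from above on such a region is precisely the hard direction, and your plan leaves it open.

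The paper sidesteps this by \emph{not} pinning down the jumping coefficients exactly. For non-integrability it uses monotonicity: near $o$ every summand is $\le 0$, so dropping all but one gives $\varphi\le\varphi_k:=\log|z_1|+\lambda_k\psi_k$ and hence $\mathscr{I}(c\varphi)_o\subset\mathscr{I}(c\varphi_k)_o$; a single explicit integral then shows $z_2^m\notin\mathscr{I}((1-\varepsilon)\varphi_k)_o$ for suitable $k,\varepsilon$, forcing $z_2^m\notin\mathscr{I}((1-\varepsilon)\varphi)_o$. For integrability it bypasses any estimate on the product: since $\nu(\varphi,(0,z_2))=1$ for $z_2\neq 0$, Skoda gives $V(\mathscr{I}((1-\varepsilon)\varphi))\subset\{o\}$ near $o$, so $(z_1,z_2)\subset\mathrm{rad}\,\mathscr{I}((1-\varepsilon)\varphi)_o$ and some $z_2^{N(\varepsilon)}$ lies in the ideal. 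The upshot is that the thresholds $c_m:=\sup\{c:z_2^m\in\mathscr{I}(c\varphi)_o\}$ satisfy $c_m<1$ for every $m$ while $\sup_m c_m=1$, so they cluster at $1$. Monotonicity handles one direction and coherence (via the radical) the other; neither requires the cross-terms to be inert, which is the missing idea in your proposal.
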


By Theorem \ref{cluster}, one can obtain that the plurisubharmonic function $\varphi$ will not be equisingular to any plurisubharmonic function with analytic singularities (here, ``equisingular" for $\varphi_1$ and $\varphi_2$ means that $e^{-2\varphi_1}-e^{-2\varphi_2}$ is locally integrable). We will consider general case in the next subsection.

\subsection{Equisingular problem on plurisubharmonic functions}

In \cite{De01} (see also \cite{De10}), Demailly proved that, for any given quasi-psh function $\varphi$ on compact Hermitian manifold $X$, one can choose equisingular quasi-psh functions $\varphi_{j}\ (j=1, 2, ...)$ on $X$, which are smooth outside the poles and decreasingly convergent to $\varphi$. In \cite{Guan}, Guan showed that the above result does not hold if $\varphi_{j}$ are supposed to have analytic singularities.

A plurisubharmonic function $\varphi$ will be said to have \emph{generalized analytic singularities} if for any point $x_0\in X$ there exists a ball $B_r(x_0)$ such that for every complex line $L\subset B_r(x_0)$, $\varphi|_L$ has analytic singularities. The set of plurisubharmonic functions on $X$ with generalized analytic singularities is denoted by $\mathcal{E}(X)$. In particular, $\mathcal{E}(X)$ contains any $\varphi\in Psh(X)$, which can be written locally as
$$\varphi=\log\big(\sum\limits_{j=1}^{N}\prod\limits_{k=1}^{m_j}|f_{jk}|^{\lambda_{jk}}\big)+O(1),$$
where $\lambda_{jk}\in\mathbb{R}^+$ and $f_{jk}$ are holomorphic functions.

Now, it is natural to put forward such a question:\\

\textbf{Question.} \emph{For any given $\varphi\in Psh(X)$, can one choose $\varphi_{A}\in\mathcal{E}(X)$ such that $\varphi_{A}$ is equisingular to $\varphi$?}\\

In this subsection, we establish the following result on equisingularties of plurisubharmonic functions.

\begin{thm} \label{equi_general}
Let $\chi$ be a convex increasing function on $\mathbb{R}$ with $\lim\limits_{t\to-\infty}\chi'(t)=C_0>0$. Let $\varphi$ be a plurisubharmonic function near $o\in\mathbb{C}^n$ with $C_0\cdot\nu(\varphi,o)\geq n$ and $\varphi_A\in\mathcal{E}(\mathbb{C}^n)$ has generalized analytic singularities. If $e^{-2\chi\circ\varphi}-e^{-2\varphi_A}$ is locally integrable near $o$, then $\lim\limits_{t\to-\infty}|\chi(t)-C_0t|<\infty$.
\end{thm}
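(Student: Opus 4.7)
The plan is to argue by contradiction. Since $\chi$ is convex, $\chi'(t) \ge C_0$ for all $t$, so $\eta(t):=\chi(t)-C_0t$ is non-decreasing and admits a monotone limit $L\in[-\infty,\chi(0)]$ as $t\to-\infty$; the desired conclusion $\lim_{t\to-\infty}|\chi(t)-C_0t|<\infty$ is equivalent to $L>-\infty$. I assume $L=-\infty$, i.e.\ $\eta(t)\to-\infty$, and aim to show that the hypothesis forces $e^{-2\chi\circ\varphi}-e^{-2\varphi_A}$ to fail to be locally integrable near $o$.

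The first move is to reduce to a one-dimensional problem by slicing along complex lines through $o$. Parametrising $\mathbb{C}^n\setminus\{0\}$ by $(v,\zeta)\in\mathbb{CP}^{n-1}\times\mathbb{C}^*$ with $z=\zeta v$, the Fubini decomposition $dV=|\zeta|^{2n-2}\,dA(\zeta)\,d\omega(v)$ forces, for almost every $v$,
$$\int_{|\zeta|<r}\bigl|e^{-2\chi\varphi(\zeta v)}-e^{-2\varphi_A(\zeta v)}\bigr|\,|\zeta|^{2n-2}\,dA(\zeta)<\infty.$$
For generic $v$ I may assume $\nu(\varphi|_{L_v},0)=\nu:=\nu(\varphi,o)$ and that $\varphi_A|_{L_v}$ has analytic singularities at $0$, so $\varphi_A|_{L_v}(\zeta)=c_v\log|\zeta|+h_v(\zeta)$ with $h_v$ locally bounded. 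The Lelong sup-estimate gives $\varphi|_{L_v}(\zeta)\le(\nu-\varepsilon)\log|\zeta|$ for $|\zeta|$ small and any $\varepsilon>0$, and monotonicity of $\chi$ then yields
$$e^{-2\chi(\varphi|_{L_v}(\zeta))}\ge|\zeta|^{-2C_0(\nu-\varepsilon)}\,e^{-2\eta((\nu-\varepsilon)\log|\zeta|)},$$
where under $\eta\to-\infty$ the factor $e^{-2\eta(\cdot)}$ grows (subpolynomially, since $\eta(t)/t\to 0$) to $+\infty$.

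I then case-split on $c_v$ versus $C_0\nu$. Three cases give direct contradictions: if $c_v>C_0\nu$, the polynomial factor $|\zeta|^{-2c_v}$ dominates all others and $\int_0^r s^{2n-1-2c_v}\,ds$ diverges since $c_v>C_0\nu\ge n$; if $c_v<n$, the weighted integral of $e^{-2\varphi_A|_{L_v}}$ is finite whereas that of $e^{-2\chi\varphi|_{L_v}}$ is infinite (by Jensen applied to $C_0\varphi|_{L_v}$, whose Lelong number is at least $n$), so the difference cannot lie in $L^1$; if $n\le c_v<C_0\nu$ (possible only when $C_0\nu>n$), choose $\varepsilon$ with $C_0(\nu-\varepsilon)>\max(n,c_v)$, whereupon $e^{-2\chi\varphi|_{L_v}}$ polynomially dominates $e^{-2\varphi_A|_{L_v}}$, $|e^{-2\chi\varphi|_{L_v}}-e^{-2\varphi_A|_{L_v}}|\ge\tfrac12 e^{-2\chi\varphi|_{L_v}}$, and its weighted integral diverges since the polynomial exponent is at most $-1$.

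The principal difficulty, and the main obstacle, is the critical case $c_v=C_0\nu$, which is unavoidable when $C_0\nu=n$: here the polynomial exponents of the two integrands match, and divergence of the difference must come from the subpolynomial factor $e^{-2\eta}$. I would handle it by circular averaging. Decompose $e^{-2\chi(t)}=e^{-2C_0t}\cdot e^{-2\eta(t)}$. On the circle $|\zeta|=s$, the bound $\varphi|_{L_v}(\zeta)\le(\nu-\varepsilon)\log s$ combined with the monotonicity of $\eta$ gives $e^{-2\eta(\varphi|_{L_v}(\zeta))}\ge e^{-2\eta((\nu-\varepsilon)\log s)}$ pointwise. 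Applying Jensen's inequality to the convex function $t\mapsto e^{-2C_0t}$, together with the standard fact that for subharmonic $\varphi|_{L_v}$ of Riesz mass $\nu$ at $0$ one has $\bar\varphi(s):=\frac{1}{2\pi}\int_0^{2\pi}\varphi|_{L_v}(se^{i\theta})\,d\theta=\nu\log s+O(1)$, yields
$$\tfrac{1}{2\pi}\int_0^{2\pi}e^{-2\chi(\varphi|_{L_v}(se^{i\theta}))}\,d\theta\ge e^{-2\eta((\nu-\varepsilon)\log s)}\cdot e^{-2C_0\bar\varphi(s)}\ge c\,s^{-2C_0\nu}\,e^{-2\eta((\nu-\varepsilon)\log s)},$$
whereas the analogous average of $e^{-2\varphi_A|_{L_v}}$ is only $O(s^{-2c_v})=O(s^{-2C_0\nu})$. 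The discrepancy factor $e^{-2\eta((\nu-\varepsilon)\log s)}$ is unbounded as $s\to 0$, so for $s$ small the circular average of the difference is bounded below by a positive constant times $s^{-2C_0\nu}e^{-2\eta((\nu-\varepsilon)\log s)}$; integrating against $s^{2n-1}\,ds$ and substituting $u=-\log s$ produces the divergent integral $\int^\infty e^{2\rho((\nu-\varepsilon)u)}\,du$, where $\rho(s):=-\eta(-s)\to+\infty$. This contradicts the line-integrability obtained by slicing, completing the proof.
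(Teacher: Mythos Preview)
Your reduction to one complex variable by slicing along lines through $o$ is the same as the paper's (the paper writes it as a change of variables via $\xi(z)=(z_1,z_1z_2,\dots,z_1z_n)$, whose Jacobian produces exactly your weight $|\zeta|^{2(n-1)}$), and your critical-case argument, while more elaborate than the paper's, is sound. The genuine problem is the case $c_v>C_0\nu$.

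There you assert that ``the polynomial factor $|\zeta|^{-2c_v}$ dominates all others''. This is not justified. The displayed inequality you derived,
\[
e^{-2\chi(\varphi|_{L_v}(\zeta))}\ \ge\ |\zeta|^{-2C_0(\nu-\varepsilon)}\,e^{-2\eta((\nu-\varepsilon)\log|\zeta|)},
\]
is only a \emph{lower} bound on $e^{-2\chi\circ\varphi}$; it gives no control from above. The Lelong number of $\chi\circ\varphi|_{L_v}$ being $C_0\nu$ bounds $\sup_{|\zeta|=s}\chi\circ\varphi$ from above (hence $e^{-2\chi\circ\varphi}$ from below), but $\varphi|_{L_v}$ may have further poles accumulating to $0$, so $e^{-2\chi\circ\varphi}$ need not be $o(|\zeta|^{-2c_v})$ pointwise or even on circular averages. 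Since in this case both $|\zeta|^{2(n-1)}e^{-2\chi\circ\varphi}$ and $|\zeta|^{2(n-1)}e^{-2\varphi_A}$ are non-integrable, you cannot conclude that their difference fails to be in $L^1$ without an additional argument.

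The paper avoids the case split entirely by a short lemma (Lemma~\ref{lem1}): if $e^{-2\varphi_1}-e^{-2\varphi_2}$ is locally integrable in $\mathbb{C}$ and $\nu(\varphi_1,0)\ge 1$, then $\nu(\varphi_1,0)=\nu(\varphi_2,0)$. The trick is to choose $a\in(\min\nu_i,\min\{\max\nu_i,\min\nu_i+1\})$ and multiply by $|\zeta|^{2(a-1)}$; this preserves local integrability of the difference (since $a\ge 1$), but pushes one of the Lelong numbers below $1$ while keeping the other above $1$, so by Skoda one term becomes integrable and the other does not, a contradiction. Applied to $\varphi_1=\chi\circ\varphi|_{L_v}-(n-1)\log|\zeta|$ and $\varphi_2=\varphi_A|_{L_v}-(n-1)\log|\zeta|$, this forces $c_v=C_0\nu(\varphi|_{L_v},0)$ immediately, and only the critical case remains. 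There the paper's argument is also simpler than your Jensen averaging: since $\varphi|_{L_v}(\zeta)\le\nu'\log|\zeta|+O(1)$ pointwise (with $\nu'=\nu(\varphi|_{L_v},0)$) and $\eta\to-\infty$, one has $\chi\circ\varphi-c_v\log|\zeta|=\eta(\varphi)+C_0(\varphi-\nu'\log|\zeta|)\to-\infty$, so $|\zeta|^{2c_v}(e^{-2\chi\circ\varphi}-e^{-2\varphi_A})\to+\infty$ pointwise; multiplying by $|\zeta|^{2(n-1-c_v)}\ge|\zeta|^{-2}$ gives non-integrability directly. Replacing your $c_v>C_0\nu$ paragraph with the Lemma~\ref{lem1} argument (and noting that only $\nu(\varphi|_{L_v},0)\ge\nu(\varphi,o)$ is needed, which is automatic) would close the gap.
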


\begin{rem}
$(1)$ By Theorem \ref{equi_general}, for any plurisubharmonic function $\varphi$ near $o\in\mathbb{C}^n$ with $\nu(\varphi,o)>0$, there exists a convex increasing function $\chi$ on $\mathbb{R}$ such that $e^{-2\chi\circ\varphi}-e^{-2\varphi_A}$ is not locally integrable near $o$.

$(2)$ By a similar discussion in \cite{Guan}, we have that, for any complex manifold $X$ (compact or noncompact) with $\dim X\geq2$ and $z_0\in X$, there exists a quasi-psh function $\varphi$ on $X$ such that for any plurisubharmonic function $\varphi_A$ with generalized analytic singularities near $z_0$, $e^{-2\varphi}-e^{-2\varphi_A}$ is not locally integrable near $z_0$.
\end{rem}

Note that in Theorem \ref{equi_general}, the Lelong number $\nu(\chi\circ\varphi,o)\geq n$ is necessary. For Lelong number near 1 case, following from the plurisubharmonic function $\varphi$ as in Theorem \ref{cluster} ($M$ large enough), we obtain

\begin{thm} \label{equisingular}
Let $\varphi\in Psh(\mathbb{C}^n)$ be the plurisubharmonic function as in Theorem \ref{cluster} and $\varphi_A$ a plurisubharmonic function with generalized analytic singularities near the origin $o$. Then, $e^{-2\varphi}-e^{-2\varphi_A}$ is not locally integrable near $o$.
\end{thm}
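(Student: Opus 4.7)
The plan is a proof by contradiction: suppose $\varphi_A \in \mathcal{E}(\mathbb{C}^n)$ satisfies $e^{-2\varphi} - e^{-2\varphi_A} \in L^1_{\mathrm{loc}}$ near $o$. The first, key move is to propagate this $L^1$ equisingularity from the scale $c=1$ down to every $c \in (0,1]$. From concavity of $x \mapsto x^c$ one has $|a^c - b^c| \le |a-b|^c$ for $a, b \ge 0$ and $c \in (0,1]$, hence pointwise
\[
|e^{-2c\varphi} - e^{-2c\varphi_A}| \le |e^{-2\varphi} - e^{-2\varphi_A}|^c .
\]
On a bounded neighborhood $U$ of $o$ the trivial estimate $h^c \le 1 + h$ shows that the right-hand side is in $L^1(U)$, and multiplication by $|f|^2$ for a holomorphic germ $f$ (locally bounded) preserves integrability. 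Thus $\mathscr{I}(c\varphi)_o = \mathscr{I}(c\varphi_A)_o$ for every $c \in (0,1]$, so in particular $\mathrm{Jump}(\varphi;o) \cap (0,1] = \mathrm{Jump}(\varphi_A;o) \cap (0,1]$. By Theorem~\ref{cluster} this common set accumulates at $1$ from below (accumulation from above is forbidden by strong openness), hence $\mathrm{Jump}(\varphi_A;o)$ also has $1$ as a cluster point.

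The second step extracts a contradiction from $\varphi_A \in \mathcal{E}(\mathbb{C}^n)$. Restricting to a generic complex line $L$ through $o$, the analytic-singularity property of $\varphi_A|_L$ gives $\varphi_A|_L = c_L \log|t| + O(1)$ near $0$, whose one-dimensional jumping coefficients at $o$ form the discrete set $\{k/c_L : k \ge 1\}$ with no finite cluster point. A suitable restriction theorem for multiplier ideals along a generic line, combined with the explicit structure of the $\varphi$ from Theorem~\ref{cluster} (with $M$ large enough), should then force $\mathrm{Jump}(\varphi_A;o)$ near $1$ to embed into such a discrete one-dimensional pattern, contradicting the cluster point established above.

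The hard part is exactly this final step: transferring the line-wise discreteness of jumping coefficients of $\varphi_A|_L$ to ambient discreteness of $\mathrm{Jump}(\varphi_A;o)$ near $1$. Since $c_L$ can genuinely vary as $L$ moves over directions through $o$, no single line classifies $\mathrm{Jump}(\varphi_A;o)$. I expect the argument to exploit the tight placement of the cluster sequence $\xi_k \to 1^-$ produced in Theorem~\ref{cluster} for $M$ large, together with the ambient rigidity imposed by the behaviour of $\varphi_A$ along varying lines, so that no family $(c_L)_L$ arising from a plurisubharmonic function with generalized analytic singularities can accommodate the $\xi_k$ simultaneously.
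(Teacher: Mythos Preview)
Your first step is correct and is also the paper's opening move: from $|a^c-b^c|\le|a-b|^c$ one gets $e^{-2c\varphi}-e^{-2c\varphi_A}\in L^1_{\mathrm{loc}}$ for every $c\in(0,1]$, hence $\mathscr{I}(c\varphi)_o=\mathscr{I}(c\varphi_A)_o$ throughout this range.

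The genuine gap is the second step, which you yourself flag as unfinished. The line-restriction strategy you sketch runs into precisely the obstacle you name: the definition of $\mathcal{E}(X)$ controls $\varphi_A$ only along individual lines, with no uniformity in $c_L$ as $L$ varies, and there is no restriction theorem that embeds $\mathrm{Jump}(\varphi_A;o)$ into the discrete set $\{k/c_L\}$ for a single $L$. Nothing in the hypotheses rules out, a priori, that a function in $\mathcal{E}$ has clustering jumping coefficients, and the paper never attempts to prove such an abstract discreteness statement.

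The paper's argument is structurally different and produces a single explicit obstruction class rather than bounding all jumps. After reducing to $n=2$, from $\mathscr{I}(\varphi_A)_o=\mathscr{I}(\varphi)_o=(z_1)$ and Siu's decomposition theorem one writes $\varphi_A=\lambda\log|z_1|+\psi$ with $\lambda\ge1$ and $\psi$ of generic Lelong number $0$ along $\{z_1=0\}$; a short comparison forces $\lambda=1$. The generalized-analytic-singularities hypothesis then enters, together with Lemma~\ref{Fubini}, to guarantee $\psi|_{\{z_1=0\}}\not\equiv-\infty$ (this is exactly where the choice $\varphi_A=\varphi$ would fail: for the $\varphi$ of Theorem~\ref{cluster} one computes $\psi|_{\{z_1=0\}}=\sum_k M^k\log|z_2/k|\equiv-\infty$). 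Hence $\nu(\psi|_{\{z_1=0\}},o)<\infty$, so some power $z_2^N$ lies in $\mathscr{I}(c\,\psi|_{\{z_1=0\}})_o$ for every $c\in(0,1)$. Ohsawa--Takegoshi $L^2$ extension (Theorem~\ref{O-T}) with weight $c\psi+c\log|z_1|$ then lifts this to a holomorphic $F_c$ with $F_c\in\mathscr{I}(c\varphi_A)_o$ and $F_c|_{\{z_1=0\}}=z_2^N$; since $z_1\in\mathscr{I}(c\varphi_A)_o$ one deduces $z_2^N\in\mathscr{I}(c\varphi_A)_o=\mathscr{I}(c\varphi)_o$ for every $c\in(0,1)$. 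But the computation in the proof of Theorem~\ref{cluster} shows that for each fixed $N$ there exists $\varepsilon_0>0$ with $z_2^N\notin\mathscr{I}((1-\varepsilon)\varphi)_o$ for all $\varepsilon\in(0,\varepsilon_0]$, a contradiction. The engine is thus Siu decomposition plus Ohsawa--Takegoshi, not restriction of $\varphi_A$ to lines through $o$.
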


\section{Some known results}
To prove main results, the following special case of Ohsawa-Takegoshi $L^2$ extension theorem and Siu's decomposition theorem are necessary.

\begin{thm} \emph{(\cite{D-K}, Theorem 2.1).} \label{O-T}
Let $\Omega\subset\mathbb{C}^n$ be a bounded pseudoconvex domain, and let $H$ be an affine linear subspace of $\mathbb{C}^n$ of codimension $p\geq1$ given by an orthogonal system $s$ of affine linear equations $s_1=\cdots=s_p=0$. For every $\beta<p$, there exists a constant $C_{\beta,n,\Omega}$ depending only on $\beta,n$ and the diameter of $\Omega$, satisfying the following property. For every plurisubharmonic function $\varphi\in Psh(\Omega)$ and $f\in\mathcal{O}(\Omega\cap H)$ with $\int_{\Omega{\cap}H}|f|^2e^{-\varphi}d\lambda_H<+\infty$, there exists an extension $F\in\mathcal{O}(\Omega)$ of $f$ such that
$$\int_{\Omega}|F|^2|s|^{-2\beta}e^{-\varphi}d\lambda_n\leq C_{\beta,n,\Omega}\int_{\Omega{\cap}H}|f|^2e^{-\varphi}d\lambda_H,$$
where $d\lambda_n$ and $d\lambda_H$ are the Lebesgue volume elements in $\mathbb{C}^n$ and $H$ respectively.
\end{thm}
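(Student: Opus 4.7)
The plan is to establish the extension estimate by applying Hörmander's $L^2$ theory for $\bar\partial$ with a carefully chosen singular weight along $H$, in the spirit of the Demailly--Kollár strategy for this twisted version of the Ohsawa--Takegoshi theorem. First I would reduce to a convenient geometry: by an orthogonal change of variables, one may assume $\mathbb{C}^n = \mathbb{C}^{n-p}_z \times \mathbb{C}^p_w$, with $H = \{w=0\}$ and $s(z,w) = w$, so that $|s|^2 = |w|^2$. The function $f \in \mathcal{O}(\Omega \cap H)$ then admits a trivial smooth extension $\tilde f(z,w)$ obtained by multiplying the constant-in-$w$ lift by a cutoff function $\theta(|w|^2)$ equal to $1$ near $H$; the error $g := \bar\partial \tilde f$ is smooth, supported in a tubular neighborhood of $H$, and vanishes identically in a smaller tube.

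Next I would look for the holomorphic extension in the form $F = \tilde f - u$ where $\bar\partial u = g$ and $u|_H = 0$. The boundary condition on $H$ will be enforced automatically by requiring $u \in L^2$ with respect to the weight $\Phi = \varphi + \beta \log|s|^2$, since $|s|^{-2\beta}$ fails to be integrable in the transverse fiber direction unless $u$ vanishes along $H$. To make Hörmander's estimate applicable and to obtain the correct right-hand side, I would further twist by a convex profile $\tau(-\log|s|^2)$ and work with the Bochner--Kodaira--Nakano inequality on $(n,1)$-forms. The Demailly--Kollár choice is an iterated-logarithm function on a neighborhood of $H$, engineered so that $i\partial\bar\partial$ of the total weight dominates the source $g$ in the norm required by the a priori estimate.

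The principal obstacle is the execution of this twisted Bochner--Kodaira argument so that the resulting estimate collapses to a clean integral over $\Omega\cap H$. The singular weight $\beta \log|s|^2$ concentrates the right-hand-side mass along $H$, and after extracting the transverse fiber integration (through the coarea formula or direct computation in the model geometry), the bulk estimate reduces to $\int_{\Omega\cap H} |f|^2 e^{-\varphi} d\lambda_H$ times a multiplicative constant. The hypothesis $\beta < p$ enters exactly as the convergence threshold for the fiber integral $\int_{|w|<R} |w|^{-2\beta} dV_w$, and tracking the dependence of the constant shows it depends only on $\beta$, $n$, and the diameter of $\Omega$, with no further geometric data entering a Hörmander-type estimate over a bounded pseudoconvex domain. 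The pseudoconvexity of $\Omega$ is used only to guarantee that Hörmander's theorem is available on $\Omega$ with arbitrary plurisubharmonic weights.
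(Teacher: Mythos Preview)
The paper does not prove this statement at all: Theorem~\ref{O-T} is quoted in the section ``Some known results'' as a citation of \cite[Theorem~2.1]{D-K}, with no proof given. It is used purely as a black box later in the proof of Theorem~\ref{equisingular}. So there is nothing in the paper to compare your proposal against.

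That said, your sketch is a reasonable high-level outline of the standard twisted Bochner--Kodaira approach to this version of Ohsawa--Takegoshi, and it correctly identifies the role of $\beta<p$ as the integrability threshold for the transverse factor $|w|^{-2\beta}$ and the mechanism by which the singular weight forces $u|_H=0$. If you were actually writing this up rather than citing it, the part that would need real work is the ``principal obstacle'' you flag yourself: the precise choice of the auxiliary twist function and the verification that the curvature term it produces dominates the error $g$ in the a priori inequality, so that the constant really depends only on $\beta$, $n$, and $\mathrm{diam}\,\Omega$. But for the purposes of this paper none of that is needed; a bare citation suffices.
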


\begin{thm} \emph{(see \cite{De10}, Theorem 2.18).} \label{Siu_decomposition}
Let $T$ be a closed positive current of bidimension $(p, p)$. Then $T$ can be written as a convergent series of closed positive currents $$T=\sum_{k=1}^{+\infty}\lambda_k[A_k]+R,$$
where $[A_k]$ is a current of integration over an irreducible analytic set of dimension $p$, and $R$ is a residual current with the property that $\dim E_c(R)<p$ for every $c>0$. This decomposition is locally and globally unique: the sets $A_k$ are precisely the $p$-dimensional components occurring in the upperlevel sets $E_c(T)$, and $\lambda_k=\min_{x\in A_k}\nu(T,x)$ is the generic Lelong number of $T$ along $A_k$.
\end{thm}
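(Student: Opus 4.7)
The plan is to argue by contradiction: suppose there exists $\varphi_A\in\mathcal{E}(\mathbb{C}^n)$ with generalized analytic singularities near $o$ such that $e^{-2\varphi}-e^{-2\varphi_A}\in L^1_{\mathrm{loc}}$ near $o$. I will derive a contradiction with the cluster-point property of Jump$(\varphi;o)$ provided by Theorem~\ref{cluster}.

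The first step is to propagate the $L^1_{\mathrm{loc}}$-closeness from coefficient $1$ down to every $\xi\in(0,1)$. After shrinking a neighborhood of $o$ and subtracting a common constant $C\geq\max(\sup\varphi,\sup\varphi_A)$ from both functions, arrange $\varphi,\varphi_A\leq 0$, so $e^{-2\varphi},e^{-2\varphi_A}\geq 1$; the difference $e^{-2\varphi}-e^{-2\varphi_A}$ is merely rescaled by a positive constant and remains in $L^1_{\mathrm{loc}}$. For $\xi\in(0,1)$, the map $t\mapsto t^\xi$ on $[1,\infty)$ has derivative $\xi t^{\xi-1}\leq\xi$, so by the mean value theorem
\[
\bigl|e^{-2\xi\varphi}-e^{-2\xi\varphi_A}\bigr|=\bigl|(e^{-2\varphi})^\xi-(e^{-2\varphi_A})^\xi\bigr|\leq\xi\bigl|e^{-2\varphi}-e^{-2\varphi_A}\bigr|,
\]
hence $e^{-2\xi\varphi}-e^{-2\xi\varphi_A}\in L^1_{\mathrm{loc}}$. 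Multiplying by $|f|^2$ for an arbitrary holomorphic germ $f\in\mathcal{O}_o$ (which is bounded near $o$) and comparing local integrabilities yields
\[
\mathscr{I}(\xi\varphi)_o=\mathscr{I}(\xi\varphi_A)_o\quad\text{for every }\xi\in(0,1).
\]

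The second step is to transfer the cluster point. By Theorem~\ref{cluster} there is a strictly increasing sequence $\xi_k\nearrow 1$ of jumping coefficients of $\varphi$ at $o$, so $\{\mathscr{I}(\xi_k\varphi)_o\}$ is a strictly decreasing chain of ideals in $\mathcal{O}_o$. By the first step this chain coincides with $\{\mathscr{I}(\xi_k\varphi_A)_o\}$, so each $\xi_k$ is a jumping coefficient of $\varphi_A$ at $o$ as well, and $1$ is a cluster point of Jump$(\varphi_A;o)$.

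The third and hardest step is to show that this is impossible for $\varphi_A\in\mathcal{E}$: the jumping coefficients of a plurisubharmonic function with generalized analytic singularities cannot have a cluster point. The clean picture is one-dimensional, where on each complex line $L$ through $o$ the restriction $\varphi_A|_L$ has analytic singularities of the form $c_L\log|z|+O(1)$, producing the discrete set Jump$(\varphi_A|_L;o)=c_L^{-1}\mathbb{Z}_{>0}$. The main obstacle will be to pass from this line-wise discreteness to the full $n$-dimensional Jump$(\varphi_A;o)$; I would attempt this by applying the Ohsawa-Takegoshi extension theorem (Theorem~\ref{O-T}) to a generic complex line or, inductively, to a generic hyperplane, using Theorem~\ref{Siu_decomposition} to control the singular support of $dd^c\varphi_A$. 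Concretely, representatives $f_k\in\mathscr{I}(\xi_k\varphi_A)_o\setminus\mathscr{I}(\xi_{k+1}\varphi_A)_o$ should, after restriction to and re-extension from a generic line by $L^2$-methods with loss $\beta<n-1$ in the $|s|^{-2\beta}$ weight, force an accumulation of jumping coefficients on the line, contradicting the analytic-singularity structure of $\varphi_A|_L$.
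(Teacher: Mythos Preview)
Your proposal does not address the stated theorem at all. The statement you were given is Siu's decomposition theorem for closed positive currents (Theorem~\ref{Siu_decomposition}), which the paper merely \emph{cites} from \cite{De10} without proof. Your write-up is instead an attempted proof of Theorem~\ref{equisingular}. Indeed, you invoke Theorem~\ref{Siu_decomposition} as a tool inside your argument, so you are clearly not trying to establish it.

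Treating your text as a proof of Theorem~\ref{equisingular}, your first two steps are correct and match the paper: the mean-value inequality for $t\mapsto t^\xi$ gives $e^{-2\xi\varphi}-e^{-2\xi\varphi_A}\in L^1_{\mathrm{loc}}$ for all $\xi\in(0,1)$, hence $\mathscr{I}(\xi\varphi)_o=\mathscr{I}(\xi\varphi_A)_o$. The genuine gap is your Step~3. You assert that $\varphi_A\in\mathcal{E}$ forces $\mathrm{Jump}(\varphi_A;o)$ to be discrete, but you only observe that each restriction $\varphi_A|_L$ to a line has discrete jumps $c_L^{-1}\mathbb{Z}_{>0}$; this says nothing about $\mathrm{Jump}(\varphi_A;o)$ in $\mathbb{C}^n$, and your proposed ``restrict then re-extend by Ohsawa--Takegoshi'' does not produce a contradiction, since extending $f_k|_L$ with $L^2$ control only shows membership in the ideal, not non-membership. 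There is no mechanism in your sketch that rules out infinitely many jumps below~$1$.

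The paper avoids this general discreteness question entirely. After reducing to $n=2$ and obtaining $\mathscr{I}(\varphi_A)_o=(z_1)$, it applies Siu's decomposition to $dd^c\varphi_A$ to write $\varphi_A=\lambda\log|z_1|+\psi$ with $\psi|_{\{z_1=0\}}\not\equiv-\infty$, shows $\lambda=1$ by comparing integrability along $\{z_1=0\}$, and then picks a single $N$ with $z_2^N\in\mathscr{I}(\psi|_{\{z_1=0\}})_o$. Ohsawa--Takegoshi extension from the hyperplane $\{z_1=0\}$ with the weight $c\psi+c\log|z_1|$ (here $\beta<1$) yields $z_2^N\in\mathscr{I}(c\varphi_A)_o$ for \emph{every} $c\in(0,1)$, and since $\mathscr{I}(c\varphi_A)_o=\mathscr{I}(c\varphi)_o$ this contradicts the explicit computation in the proof of Theorem~\ref{cluster} showing $z_2^N\notin\mathscr{I}((1-\varepsilon)\varphi)_o$ for small $\varepsilon$. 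The point is that the paper exploits the specific structure $\mathscr{I}(\varphi)_o=(z_1)$ to get a concrete decomposition of $\varphi_A$, rather than attempting a general statement about jumping numbers of the class $\mathcal{E}$.
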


The proof of Lemma 4.7 in \cite{G-Z_Lelong1} implies
\begin{lem} \label{Fubini}
Let $p:\Delta^2\to\Delta,\ z=(z_1,z_2)\mapsto z_1$ and $\varphi\in Psh(\Delta^2)$. For almost all $z_1\in\Delta$ (in the sense of the Lebesgue measure on $\Delta$), the level set of Lelong numbers of $\varphi|_{p^{-1}(z_1)}$ satisfies
$$\{z|\nu(\varphi|_{p^{-1}(z_1)},z)=0\}=(\{z|\nu(\varphi,z)=0\}\cap p^{-1}(z_1)).$$
\end{lem}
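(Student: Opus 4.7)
The plan is to verify both inclusions of the stated set equality. The inclusion ``$\subseteq$'' should hold for every $z_1 \in \Delta$ with $\varphi|_{p^{-1}(z_1)} \not\equiv -\infty$ (a full-measure condition since $\varphi \in L^1_{\mathrm{loc}}(\Delta^2)$), by the elementary monotonicity $\nu(\varphi|_{p^{-1}(z_1)}, a) \geq \nu(\varphi, a)$ for $a \in p^{-1}(z_1)$: restricting $\sup_{|z-a|<r} \varphi$ to the slice $z \in p^{-1}(z_1)$ can only decrease it, and dividing by $\log r < 0$ then reverses the inequality. Hence vanishing of the restricted Lelong number forces vanishing of the ambient one.

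For the reverse inclusion ``$\supseteq$'' I would combine Skoda's integrability theorem with a Fubini argument on a countable family of product boxes. The key translation is that $\nu(\psi, z) = 0$ (in one or two complex variables) is equivalent to $e^{-2c\psi}$ being locally integrable at $z$ for every $c > 0$. Enumerate all ``rational product boxes'' $\{U_j \times V_j\}_{j \in \mathbb{N}}$ in $\Delta^2$, where $U_j, V_j \subset \Delta$ are disks with rational centers and radii. For each pair $(j,c) \in \mathbb{N}^2$ satisfying $\int_{U_j \times V_j} e^{-2c\varphi}\, d\lambda < \infty$, Fubini produces a null set $N_{j,c} \subset U_j$ so that for every $z_1 \in U_j \setminus N_{j,c}$ one has $\int_{V_j} e^{-2c\varphi(z_1, \cdot)}\, d\lambda < \infty$. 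The one-variable upper bound $\psi(w) \leq \nu' \log|w-z_2| + O(1)$ (available for any $\nu' < \nu(\psi, z_2)$), combined with this integrability, then yields $\nu(\varphi|_{p^{-1}(z_1)}, z_2) \leq 1/c$ for every $z_2 \in V_j$.

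Let $\Delta_0$ be $\Delta$ with $\bigcup_{j,c} N_{j,c}$ and the null set of $z_1$ for which $\varphi|_{p^{-1}(z_1)} \equiv -\infty$ removed; since only countably many null sets are excluded, $\Delta_0$ has full measure. Given $z_1 \in \Delta_0$ and $z_2 \in \Delta$ with $\nu(\varphi, (z_1, z_2)) = 0$, Skoda's theorem supplies, for each $c \in \mathbb{N}$, an open neighborhood of $(z_1, z_2)$ on which $e^{-2c\varphi}$ is integrable, and I may then shrink to a rational box $U_j \times V_j \ni (z_1, z_2)$ inside this neighborhood (the rational boxes form a basis of the topology at $(z_1, z_2)$). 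The preceding paragraph gives $\nu(\varphi|_{p^{-1}(z_1)}, z_2) \leq 1/c$, and letting $c \to \infty$ forces $\nu(\varphi|_{p^{-1}(z_1)}, z_2) = 0$. This establishes ``$\supseteq$'' for every $z_1 \in \Delta_0$.

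The main delicacy will be in the uniformity of the Fubini exceptional set. A naive pointwise application of Fubini at each candidate $(z_2, c)$ produces an exceptional null set of $z_1$ depending on both, and the uncountable union of such sets over $z_2 \in \Delta$ need not be null. The trick of prescribing the countable family of rational boxes \emph{before} invoking Fubini sidesteps this: each single Fubini application on a fixed box $U_j \times V_j$ already controls the restricted Lelong numbers at all $z_2 \in V_j$ simultaneously, and only countably many (box, exponent) pairs enter the bookkeeping.
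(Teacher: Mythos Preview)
Your argument is correct. The inclusion ``$\subseteq$'' is the trivial monotonicity of Lelong numbers under restriction, and your treatment of ``$\supseteq$'' via Skoda's integrability theorem and Fubini on a countable basis of rational product boxes is clean; the point you flag at the end---fixing the countable family \emph{before} applying Fubini so that the exceptional set is a countable union of null sets---is exactly the issue that needs care, and you handle it properly. The one-variable implication ``$\int_{V_j} e^{-2c\varphi(z_1,\cdot)}\,d\lambda<\infty \Rightarrow \nu(\varphi|_{p^{-1}(z_1)},z_2)<1/c$ for all $z_2\in V_j$'' is valid, since in dimension one a Lelong number $\geq 1$ forces non-integrability of $e^{-2\psi}$.

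As for comparison with the paper: the paper does not give its own proof of this lemma at all. It simply records that the statement follows from the proof of Lemma~4.7 in Guan--Zhou, \emph{Characterization of multiplier ideal sheaves with weights of Lelong number one} (Adv.\ Math.\ 285 (2015), 1688--1705), and moves on. Your write-up is therefore a self-contained substitute for that citation. The Skoda-plus-Fubini mechanism you use is the natural route and is essentially what underlies the cited argument as well, so there is no substantive methodological divergence---you have just unpacked what the paper leaves to the reference.
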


For the proof of Theorem \ref{equi_general}, we also need the following two Lemmas.

\begin{lem} \label{lem1}
Let $\varphi_1, \varphi_2$ be two subharmonic functions near $o\in\mathbb{C}$. If $e^{-2\varphi_1}-e^{-2\varphi_2}$ is locally integrable near $o$ and the Lelong number $\nu(\varphi_1,o)\geq1$, then $\nu(\varphi_1,o)=\nu(\varphi_2,o)$.
\end{lem}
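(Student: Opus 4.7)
The plan is to argue by contradiction: assuming $\nu_1 \neq \nu_2$, I would produce a real number $s \geq 0$ such that $|z|^{2s} e^{-2\varphi_j}$ is locally integrable near $o$ for one $j \in \{1,2\}$ but not for the other. Since $s \geq 0$ makes $|z|^{2s}$ bounded near $o$, the hypothesis forces $|z|^{2s}(e^{-2\varphi_1} - e^{-2\varphi_2})$ to be locally integrable, and writing the non-integrable member of the pair as the sum of the integrable member plus this integrable difference will yield the desired contradiction.

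The central step is the following integrability criterion: for $\varphi$ subharmonic near $o \in \mathbb{C}$ with Lelong number $\nu(\varphi, o) = \nu$ and $s \in \mathbb{R}$, the function $|z|^{2s} e^{-2\varphi}$ is locally integrable near $o$ if and only if $s > \nu - 1$. To prove this I would invoke the Riesz representation $\varphi(z) = \nu \log|z| + \tilde\varphi(z)$ with $\tilde\varphi$ subharmonic and $\nu(\tilde\varphi, o) = 0$, so that $|z|^{2s} e^{-2\varphi} = |z|^{2(s-\nu)} e^{-2\tilde\varphi}$. For sufficiency, H\"older's inequality with an exponent $p > 1$ chosen close enough to $1$ that $2(s-\nu)p > -2$ reduces matters to the local integrability of $e^{-2q\tilde\varphi}$ for the conjugate exponent $q$, which holds by Skoda's integrability theorem in one complex variable since $\nu(q\tilde\varphi, o) = 0 < 1$. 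For necessity, upper semicontinuity gives $\tilde\varphi \leq M$ on a small disk, hence $|z|^{2s} e^{-2\varphi} \geq e^{-2M} |z|^{2(s-\nu)}$, whose right side fails to be locally integrable whenever $s \leq \nu - 1$.

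It remains to produce the weight $s$. If $\nu_2 > \nu_1 \geq 1$, pick $s$ in the non-empty interval $(\nu_1 - 1, \nu_2 - 1)$, which automatically has $s > 0$; the criterion makes $|z|^{2s} e^{-2\varphi_1}$ integrable and $|z|^{2s} e^{-2\varphi_2}$ not. If $\nu_1 > \nu_2$ and $\nu_1 > 1$, pick $s \in (\max(0, \nu_2 - 1), \nu_1 - 1]$ and conclude symmetrically. The borderline case $\nu_1 = 1 > \nu_2$ collapses that interval to the single point $s = 0$; here Skoda's theorem applied directly to $\varphi_2$ (using $\nu_2 < 1$) makes $e^{-2\varphi_2}$ locally integrable, while $e^{-2\varphi_1} \geq c|z|^{-2}$ from the Riesz decomposition of $\varphi_1$ is not. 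I expect this borderline case to be the main point requiring care, as it is precisely the endpoint of the hypothesis $\nu_1 \geq 1$; indeed, without that hypothesis the whole lemma fails, as the example $\varphi_1 \equiv 0$, $\varphi_2 = \tfrac{1}{2}\log|z|$ shows.
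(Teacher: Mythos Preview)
Your argument is correct and follows essentially the same route as the paper's proof: both multiply the hypothesis by a bounded factor $|z|^{2s}$ (the paper writes $s=a-1$) chosen so that one of $|z|^{2s}e^{-2\varphi_j}$ is locally integrable by Skoda while the other is not, forcing a contradiction. The only organizational difference is that the paper first observes $\nu(\varphi_2,o)\geq 1$ (so the roles of $\varphi_1,\varphi_2$ become symmetric and a single choice of $a$ suffices), and keeps $\varphi_i-(a-1)\log|z|$ subharmonic so Skoda applies directly without your H\"older step; your Riesz-decomposition-plus-H\"older criterion and case split achieve the same end.
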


\begin{proof}
As $e^{-2\varphi_1}-e^{-2\varphi_2}$ is locally integrable near $o$, it follows from $\nu(\varphi_1,o)\geq1$ that $\nu(\varphi_2,o)\geq1$.

Suppose that $\nu(\varphi_1,o)=\nu(\varphi_2,o)$ does not hold. Without loss of generality, we put $\nu(\varphi_1,o)>\nu(\varphi_2,o)$. Let $a\in(\nu(\varphi_2,o),\min\{\nu(\varphi_1,o),\nu(\varphi_2,o)+1\})$. Then, $\varphi_i-(a-1)\log|z|$ are subharmonic near $o$ and
$$\nu(\varphi_1-(a-1)\log|z|,o)>1,\ \nu(\varphi_2-(a-1)\log|z|,o)<1.$$
That is to say $e^{-2(\varphi_1-(a-1)\log|z|)}-e^{-2(\varphi_1-(a-1)\log|z|)}$ is not locally integrable near $o$.
However, the local integrability of $e^{-2\varphi_1}-e^{-2\varphi_2}$ near $o$ implies that
$$|z|^{2(a-1)}(e^{-2\varphi_1}-e^{-2\varphi_2})=e^{-2(\varphi_1-(a-1)\log|z|)}-e^{-2(\varphi_1-(a-1)\log|z|)}$$
is locally integrable near $o$, which is a contradiction.
\end{proof}

\begin{lem} \label{lem2}
Let $\varphi_A=C_1\log|z|+O(1)$ near $o\in\mathbb{C}$ and $\varphi$ a subharmonic function near $o\in\mathbb{C}$ with $\nu(\varphi,o)=C_2$, where $C_1, C_2$ are positive constants. Let $\chi$ be a convex increasing function on $\mathbb{R}$ such that $\lim\limits_{t\to-\infty}\chi'(t)=C_0>0$ with $C_0C_2\geq k$ for some $k\in\mathbb{N}^+$. If $|z|^{2(k-1)}(e^{-2\chi\circ\varphi}-e^{-2\varphi_A})$ is locally integrable near $o$, then $\lim\limits_{t\to-\infty}|\chi(t)-C_0t|<\infty$.
\end{lem}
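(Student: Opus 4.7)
The plan is to argue by contradiction. Set $\psi(t):=\chi(t)-C_0t$; by convexity $\psi'=\chi'-C_0\ge 0$ and $\psi'(t)\to 0$ at $-\infty$, so $\psi$ is nondecreasing and the conclusion is equivalent to $\psi$ being bounded below near $-\infty$. I assume $\psi(t)\to-\infty$. A short computation yields the sublinearity $\psi(t)=o(t)$ as $t\to-\infty$, so combining with the Siu decomposition $\varphi=C_2\log|z|+u$ (with $u$ subharmonic, $\nu(u,o)=0$, $u\le M$ near $o$) one obtains the pointwise bound $\chi\circ\varphi\le C_0C_2\log|z|+O(1)$ near $o$, hence $\nu(\chi\circ\varphi,o)=C_0C_2$. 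Write also $\varphi_A=C_1\log|z|+v$ with $v$ subharmonic (Riesz decomposition, since $\nu(\varphi_A,o)=C_1$) and $|v|\le M'$ near $o$.

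The key intermediate claim is that $C_0C_2=C_1$. First, $C_1\ge k$: otherwise $|z|^{2(k-1)}e^{-2\varphi_A}$ would be locally integrable, forcing $|z|^{2(k-1)}e^{-2\chi\varphi}\in L^1_{\mathrm{loc}}$, which contradicts $|z|^{2(k-1)}e^{-2\chi\varphi}\gtrsim|z|^{2(k-1-C_0C_2)}$ together with $C_0C_2\ge k$. Now suppose $C_0C_2>C_1$ (the case $C_0C_2<C_1$ is entirely symmetric). Since $C_1\ge k$, choose $a\in\bigl(C_1,\min(C_0C_2,C_1+1)\bigr)$; then $a>k$, so $|z|^{2(a-k)}\le 1$ for $|z|\le 1$, and the hypothesis gives
\[
|z|^{2(a-1)}(e^{-2\chi\varphi}-e^{-2\varphi_A})=e^{-2(\chi\varphi-(a-1)\log|z|)}-e^{-2(\varphi_A-(a-1)\log|z|)}\in L^1_{\mathrm{loc}}(o).
\]
The choices $a-1<C_0C_2$ and $a-1<C_1$ make both $\chi\varphi-(a-1)\log|z|$ and $\varphi_A-(a-1)\log|z|$ subharmonic, with Lelong numbers $C_0C_2-(a-1)>1$ and $C_1-(a-1)<1$; by the one-dimensional Skoda criterion the first exponential fails to be locally integrable at $o$ while the second is, contradicting the displayed formula. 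Hence $C_0C_2=C_1$.

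The final step exploits $C_0C_2=C_1$ together with the contradiction hypothesis. For $A>0$ pick $T_A$ with $\psi(t)\le -A$ for $t\le T_A$ and set $r_A:=\exp((T_A-M)/C_2)$; then $\varphi\le T_A$ on $\Delta_{r_A}$, so $\chi(\varphi)\le C_0\varphi-A$ there, which together with $u\le M$ and $|v|\le M'$ gives $e^{-2\chi\varphi}\ge e^{2A-2C_0M}|z|^{-2C_1}$ and $e^{-2\varphi_A}\le e^{2M'}|z|^{-2C_1}$ on $\Delta_{r_A}$. Consequently
\[
|z|^{2(k-1)}\bigl(e^{-2\chi\varphi}-e^{-2\varphi_A}\bigr)\ge\bigl(e^{2A-2C_0M}-e^{2M'}\bigr)|z|^{2(k-1-C_1)};
\]
since $C_1\ge k$, the function $|z|^{2(k-1-C_1)}$ is not locally integrable at $o$, and choosing $A$ large makes the coefficient positive, so integrating over $\Delta_{r_A}$ yields $+\infty$, contradicting the hypothesis. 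The main obstacle lies in the intermediate step: only the weighted integrability condition is available, so Lemma \ref{lem1} cannot be invoked verbatim; the trick is to use the slack $C_0C_2,C_1\ge k$ to push the auxiliary parameter $a$ strictly above $k$, so that the heavier weight $|z|^{2(a-1)}$ remains dominated by $|z|^{2(k-1)}$ near $o$, making the one-dimensional Skoda criterion applicable to the shifted functions.
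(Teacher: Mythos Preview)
Your proof is correct and follows essentially the same two-step strategy as the paper: first establish $C_0C_2=C_1$, then exploit the divergence $\chi\circ\varphi-C_1\log|z|\to-\infty$ to force non-integrability of $|z|^{2(k-1-C_1)}$ times a factor tending to $+\infty$.

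One remark on the intermediate step. You say Lemma~\ref{lem1} ``cannot be invoked verbatim'' because only the weighted integrability is given, and you then reprove its content by choosing $a>k$. In fact the paper applies Lemma~\ref{lem1} directly: writing
\[
|z|^{2(k-1)}\bigl(e^{-2\chi\circ\varphi}-e^{-2\varphi_A}\bigr)=e^{-2(\chi\circ\varphi-(k-1)\log|z|)}-e^{-2(\varphi_A-(k-1)\log|z|)},
\]
the two shifted functions are subharmonic with Lelong numbers $C_0C_2-(k-1)\ge 1$ and $C_1-(k-1)\ge 1$ (the latter since your argument already gives $C_1\ge k$), so Lemma~\ref{lem1} applies verbatim to them and yields $C_0C_2=C_1$ immediately. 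Your detour through the parameter $a$ is correct but unnecessary; it simply re-derives Lemma~\ref{lem1} inside the weighted setting rather than first absorbing the weight into the functions.

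A minor wording issue: the pointwise bound $\chi\circ\varphi\le C_0C_2\log|z|+O(1)$ (which follows from $\psi(t)\le\psi(0)$ for $t\le 0$, not from $\psi(t)=o(t)$) by itself only gives $\nu(\chi\circ\varphi,o)\ge C_0C_2$; the reverse inequality is where the sublinearity $\psi(t)=o(t)$ actually enters, via $\sup_{|z|=r}\chi\circ\varphi=\chi\bigl(\sup_{|z|=r}\varphi\bigr)$ and $\chi(t)/t\to C_0$.
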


\begin{proof}
As $\chi''\geq0$ and $\lim\limits_{t\to-\infty}\chi'(t)=C_0>0$, we have $\chi'(t)\geq C_0$ for any $t\in\mathbb{R}$. Then, $\chi(t)-C_0t$ is a convex increasing function on $\mathbb{R}$. Suppose that $\lim\limits_{t\to-\infty}|\chi(t)-C_0t|<\infty$ does not hold. Then $\lim\limits_{t\to-\infty}(\chi(t)-C_0t)=-\infty$

Since $\nu(\chi\circ\varphi,o)=C_0C_2\geq k$, then $\nu(\chi\circ\varphi-(k-1)\log|z|,o)\geq1$. It follows from the integrability of $|z|^{2(k-1)}(e^{-2\chi\circ\varphi}-e^{-2\varphi_A})$ near $o$ that $|z|^{-2(k-1)}e^{-2\varphi_A}$ is not locally integrable near $o$, which implies that $\varphi_A-(k-1)\log|z|$ is subharmonic near $o\in\mathbb{C}$ and $\nu(\varphi_A-(k-1)\log|z|,o)\geq1$. By Lemma \ref{lem1}, we have
$$\nu(\chi\circ\varphi-(k-1)\log|z|,o)=\nu(\varphi_A-(k-1)\log|z|,o),$$
which implies $\nu(\chi\circ\varphi,o)=\nu(\varphi_A,o)=C_1\geq k$.

By the assumption, we have $\lim\limits_{z\to0}(\chi\circ\varphi-C_0\varphi)=-\infty$. Then, we infer from $\varphi\leq C_2\log|z|+O(1)$ that $\lim\limits_{z\to0}(\chi\circ\varphi-C_0C_2\log|z|)=-\infty$. Hence,
$$\lim\limits_{z\to0}(e^{-2(\chi\circ\varphi-C_1\log|z|)}-e^{-2(\varphi_A-C_1\log|z|)})=+\infty,$$
It follows from $\nu(\chi\circ\varphi,o)=\nu(\varphi_A,o)=C_1\geq k$ that $|z|^{2(k-1)}(e^{-2\chi\circ\varphi}-e^{-2\varphi_A})$ is not locally integrable near $o$, which is a contradiction.
\end{proof}

\section{Proof of main results}

We are now in a position to prove our main results.\\
\textbf{\emph{Proof of Theorem} \ref{cluster}.}
Let $$\varphi(z)=\log|z_1|+\sum\limits_{k=1}^{\infty}\alpha_k\log(|z_1|+|\frac{z_2}{k}|^{\beta_k}),$$ where $\alpha_k=M^{-k}$, $\beta_k=M^{2k}$ and $M\geq2$ . Then $\varphi\in Psh(\mathbb{C}^n)$, $\mathscr{I}(\varphi)_o=(z_1)\cdot\mathcal{O}_n$ and $\nu(\varphi,(0,z_2))=1,\ \forall z_2\not=0$.

Given $\varepsilon\in(0,1]$. As $(z_1,z_2)\cdot\mathcal{O}_n\subset$rad$\mathscr{I}((1-\varepsilon)\varphi)_o$, there exists an integer $N>0$ such that $z_2^N\in\mathscr{I}((1-\varepsilon)\varphi)_o$.
Note that $\mathscr{I}((1-\varepsilon)\varphi)_o\subset\mathscr{I}((1-\varepsilon)\varphi_k)_o$ for every $k$, where $\varphi_k=\log|z_1|+\alpha_k\log(|z_1|+|\frac{z_2}{k}|^{\beta_k})$. To prove the desired result, it is sufficient to show that for any integer $m>0$, there exists $k_0$ and $\varepsilon_0>0$ such that $z_2^m\notin\mathscr{I}((1-\varepsilon)\varphi_{k_0})_o,\ \forall\varepsilon\in(0,\varepsilon_0]$.

Considering the integration of $|z_2|^{m}e^{-(1-\varepsilon)\varphi_k}$ over unit bidisk $\Delta\times\Delta$, we have
\begin{equation*}
\begin{split}
&\int_{\Delta\times\Delta}(|z_2|^{m}e^{-(1-\varepsilon)\varphi_k})^2d\lambda_{2}\\
=&\int_{\Delta^*}(\frac{1}{|z_1|^{(1-\varepsilon)(1+\alpha_k)}})^2\frac{\sqrt{-1}}{2}dz_1{\wedge}d\bar z_1
 \int_{\Delta}(\frac{|z_2|^m}{(1+\frac{|z_2/k|^{\beta_k}}{|z_1|})^{(1-\varepsilon)\alpha_k}})^2
 \frac{\sqrt{-1}}{2}dz_2{\wedge}d\bar z_2\\
=&\int_{\Delta^*}(\frac{1}{|z_1|^{(1-\varepsilon)(1+\alpha_k)}})^2\frac{\sqrt{-1}}{2}dz_1{\wedge}d\bar z_1
 \int_{\tilde\Delta}(\frac{|kz_1^{1/\beta_k}|^{m+1}\cdot|w|^m}{(1+|w|^{\beta_k})^{(1-\varepsilon)\alpha_k}})^2
 \frac{\sqrt{-1}}{2}dw{\wedge}d\bar w\\
\geq&\ C\cdot\int_{\Delta^{*}}(\frac{1}{|z_1|^{(1-\varepsilon)(1+\alpha_k)-\frac{m+1}{\beta_k}}})^2\frac
 {\sqrt{-1}}{2}dz_1{\wedge}d\bar z_1.
\end{split}
\end{equation*}
where $C>0$ is some constant and $\tilde\Delta$ is the domain of integration in the new variable. Then, for any $m$, it follows from $\alpha_k=M^{-k}$ and $\beta_k=M^{2k}$ that there exists $k_0$ and $\varepsilon_0>0$ such that $(1-\varepsilon_0)(1+\alpha_{k_0})-\frac{m+1}{\beta_{k_0}}\geq1$, which implies $z_2^m\notin\mathscr{I}((1-\varepsilon)\varphi_{k_0})_o,\ \forall\varepsilon\in(0,\varepsilon_0]$.
\hfill $\Box$

\noindent{\textbf{\emph{Proof of Theorem} \ref{equi_general}.}
Let $$\xi:\mathbb{C}^n\to\mathbb{C}^n,\ z\mapsto(z_1,z_1z_2,\dots,z_1z_n)$$ be a holomorphic mapping. Then $\xi$ is an isomorphism outside $\{z_1=0\}$. Hence, for sufficiently small polydisc $\Delta^n_r$, we have
$$\int_{\Delta^n_r}|z_1|^{2(n-1)}(e^{-2\chi(\varphi\circ\xi)}-e^{-2(\varphi_A\circ\xi)})d\lambda_n
  =\int\limits_{\xi(\Delta^n_r)\backslash\{z_1=0\}}(e^{-2\chi\circ\varphi}-e^{-2\varphi_A})d\lambda_n<\infty.$$
That is to say, for almost every $(z_2,...,z_n)\in\Delta^{n-1}_r$, we have
$$\int_{\Delta_r}|z_1|^{2(n-1)}(e^{-2\chi(\varphi\circ\xi)}-e^{-2(\varphi_A\circ\xi)})d\lambda_1<\infty.$$
It follows from Lemma \ref{lem2} that $\lim\limits_{t\to-\infty}|\chi(t)-C_0t|<\infty$.
\hfill $\Box$\\

\noindent{\textbf{\emph{Proof of Theorem} \ref{equisingular}.} It is enough to prove the dimension two case. In fact, if $e^{-2\varphi}-e^{-2\varphi_A}$ is locally integrable near the origin $o=(o',o'')$ for general $n\ (n\geq3)$ case, then for almost all $(a_3,...,a_n)\in\mathbb{C}^{n-2}$ near $o''$, $\big(e^{-2\varphi}-e^{-2\varphi_A}\big)|_{\{z_3=a_3,...,z_n=a_n\}}$ is locally integrable near $o'$, which is a contradiction.

Assume that $e^{-2\varphi}-e^{-2\varphi_A}$ is integrable on a relatively compact neighborhood $U_0$ of $o$. Then, $e^{-2c\varphi}-e^{-2c\varphi_A}$ is integrable on $U_0$ for any $0\leq c\leq1$. As $\mathscr{I}(\varphi)=(z_1)\cdot\mathcal{O}_{\mathbb{C}^2}$ on $U_0$, it follows that $e^{-2\varphi_A}$ is not locally integrable near $\{z_1=0\}|_{U_0}$. By Theorem \ref{Siu_decomposition}, on $U_0$, we have
$\varphi_A=\lambda\log|z_1|+\psi$, where $\lambda\geq1$ and $\psi$ is a plurisubharmonic function near $o$ such that $(\{z_1=0\},o)\not\subset(\{\nu(\psi,z)\geq c\},o),\ \forall c\in\mathbb{Q}^+$. It follows from Lemma \ref{Fubini} that $\psi|_{z_1=0}\not\equiv-\infty$.

If $\lambda>1$, then for every $z_2\not=0$, there exists $\varepsilon_0>0$ such that $\varphi_A\leq(1+\varepsilon_0)\log |z_1|$ near $(0,z_2)$. Hence, $e^{-2\frac{\varphi_A}{1+\varepsilon_0}}\geq e^{-2\log|z_1|}$ is not locally integrable near $(0,z_2)$, which is a contradiction to the local integrability of $e^{-2\frac{\varphi}{1+\varepsilon_0}}$ near $(0,z_2)$. Hence, we have $\varphi_A=\log|z_1|+\psi$ on $U_0$.

Since $\psi|_{z_1=0}\not\equiv-\infty$, then the Lelong number $\nu(\psi|_{z_1=0},o')<\infty$. Hence, we have an integer number $N>0$ such that $$z_2^N\in\mathscr{I}(\psi|_{z_1=0})_o\subset\mathscr{I}(c\psi|_{z_1=0})_o,\ \forall c\in(0,1).$$
By Theorem \ref{O-T}, there exists a holomorphic function $F_c$ on a Stein neighborhood $V_0{\subset}{\subset}U_0$ of $o$ such that
$$F_c(z)|_{V_0{\cap}\{z_1=0\}}=z_2^N|_{V_0{\cap}\{z_1=0\}}\ \mbox{and}\ F_c(z)_{o}\in\mathscr{I}(c\psi+c\log|z_1|)_{o}=\mathscr{I}(c\varphi_A)_{o}.$$

Let $F_c(z)=z_2^N+z_1\cdot G_c(z)$ for some holomorphic function $G_c(z)$ near $o$. By the assumption, we have
$$(z_1)\cdot\mathcal{O}_2=\mathscr{I}(\varphi)_o=\mathscr{I}(\varphi_A)_o\subset\mathscr{I}(c\varphi_A)_o.$$
It follows that $z_2^N=F_c(z)-z_1\cdot G_c(z)\in\mathscr{I}(c\varphi_A)_o$ for any $c\in(0,1)$. However, by the proof of Theorem \ref{cluster}, we know that there exists $\varepsilon_0>0$ such that $$z_2^N\notin\mathscr{I}((1-\varepsilon)\varphi)_o,\ \forall\varepsilon\in(0,\varepsilon_0].$$
which is a contradiction to the assumption, i.e., $e^{-2\varphi}-e^{-2\varphi_A}$ is not locally integrable near $o$.
\hfill $\Box$

\vspace{.1in} {\em Acknowledgements}. Both authors would like to sincerely thank our supervisor, Professor Xiangyu Zhou for valuable discussions and useful comments.

The second author is also grateful to Beijing International Center for Mathematical Research for its good working conditions.

%\vfill

\end{document}